\renewcommand{\a}{\alpha}
\renewcommand{\b}{\beta}
\newcommand{\g}{\gamma}
\newcommand{\G}{\Gamma}
\renewcommand{\d}{\delta}
\newcommand{\D}{\Delta}
\renewcommand{\l}{\lambda}
\renewcommand{\L}{\Lambda}
\newcommand{\m}{\mu}
\renewcommand{\t}{\tau}
\newcommand{\e}{\varepsilon}
\newcommand{\E}{{\mathbb E}}
\renewcommand{\P}{\mathbb P}
\newcommand{\Z}{{\mathbb Z}}
\newcommand{\F}{{\mathbb F}}
\newtheorem{theorem}{Theorem}[section]
\newtheorem{proposition}[theorem]{Proposition}
\newtheorem{lemma}[theorem]{Lemma}
\newtheorem{corollary}[theorem]{Corollary}
\newtheorem{fact}[theorem]{Fact}
\theoremstyle{definition}
\theoremstyle{remark}
\newtheorem{remark}[theorem]{Remark}
\begin{document}

\title[Speed on solvable groups]
      {About the speed of random walks on solvable groups}
      \author{J\'{e}r\'{e}mie Brieussel}
      \date{12th May 2015}
\maketitle

\begin{abstract}
We show that for each $\l \in [\frac{1}{2}, 1]$, there exists a solvable group and a finitely supported measure such that the associated random walk has upper speed exponent $\l$.
\end{abstract}

\section{Introduction}

The speed of a random walk measures the average distance between the starting point and the position of the particle. When the step distribution has finite support, or more generally finite first moment, the speed reflects some geometric features of the space. For instance, the speed of a random walk on a group is related to the return probability (see Kesten \cite{K} and for recent developpments Gournay \cite{G}, Saloff-Coste-Zheng \cite{SCZ}), to the entropy of the random walk (Kaimanovich-Vershik \cite{KV}) and to the Hilbert compression (Naor-Peres \cite{NP}). In fact, linearity of the speed on a group is equivalent to non-triviality of the Poisson boundary (Kaimanovich-Vershik \cite{KV}). This naturally raises the question to understand what are the possible other, i.e. sublinear, behaviors of speed. 

The central limit theorem essentially asserts that the speed of symmetric random walks on abelian groups is comparable, up to multiplicative constants, to $\sqrt{n}$. This behavior, called diffusive, is obtained for a large class of groups, including nilpotent groups (Hebisch-Saloff-Coste\cite{HSC}), the lamplighter group on cyclic base with finite lamps, solvable Baumslag-Solitar groups and many polycyclic groups (Revelle \cite{R}, Thompson \cite{T}). Wreath products, also called lamplighters, and iterated wreath products of abelian groups provide a countable infinity of other behaviors (Erschler \cite{E1} \cite{E}). Precisely, on $\G_1=\Z$ and $\G_{i+1}=\Z \wr \G_i$, the speed of some random walk is comparable with $n^{1-\frac{1}{2^i}}$, and if $\Z$ is replaced by $\Z^2$ in the inductive definition, the speed is comparable with $n$ over iterated logarithms. Erschler has also showed that when the support of the step distribution generates an infinite group, the speed is at least diffusive (Lee-Peres \cite{LP}). Moreover, the speed can be arbitrarily close to $n$ (Erschler \cite{E3}), it can oscillate between two different powers of $n$ (Erschler \cite{E4}) and the amplitude of oscillations can be maximal between diffusive and linear (Brieussel \cite{B}). 

A very large class of speed behaviors has been obtained by Amir-Virag \cite{AV}. Namely for any function between $n^{\frac{3}{4}}$ and $n^\l$ for $\l<1$ (and sufficiently regular in a mild sense), there exists a group with a finitely supported measure for which the speed of the random walk is comparable with this function. Amir and Virag also conjecture the existence of speed functions comparable to $n^\l$ for any $\l$ between $\frac{1}{2}$ and $\frac{3}{4}$. The main result of this paper is in the direction of this conjecture, giving exemples of speed functions between $\sqrt{n}$ and $n^{\frac{3}{4}}$. Recall that the upper (resp. lower) speed exponent of a random walk is the quantity $\limsup \frac{\log \E|Z_n|}{\log n}$ (resp. $\liminf \frac{\log \E|Z_n|}{\log n}$).

\begin{theorem}\label{speed}
For any $\l \in [\frac{1}{2},1]$, there exists a finitely generated solvable group with a finitely supported symmetric measure such that the upper speed exponent of the associated random walk is $\l$.

More precisely, let $\l$ belong to the interval $[1-\frac{1}{2^i},1-\frac{1}{2^{i+1}}]$ for $i \geq 1$ integer, and let $\e(n)$ be a sequence tending to infinity. Then there exists a finitely generated $i+2$-solvable group with a finitely supported symmetric measure such that the speed $\E(Z_n)$ of the associated random walk oscillates between $n^{1-\frac{1}{2^i}}$ and $n^\l$ up to multiplicative factor $\e(n)$ in the sense that $n^{1-\frac{1}{2^i}} \leq \E(Z_n) \leq n^\l$ for all $n$ large enough, $\E(Z_n) \geq \frac{n^\l}{\e(n)}$ for infinitely many $n$ and $\E(Z_n) \leq \e(n) n^{1-\frac{1}{2^i}}$ for infinitely many~$n$.
\end{theorem}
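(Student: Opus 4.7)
The plan is to extend Erschler's iterated wreath product construction by interpolating between the grid exponents. Since $\G_i$ realises the lower endpoint $1-\frac{1}{2^i}$ and $\G_{i+1}=\Z\wr\G_i$ the upper endpoint $1-\frac{1}{2^{i+1}}$, the natural scaffolding for intermediate $\l$ is a finitely generated group of the form $G=A\wr_X\G_i$, with $A$ an abelian lamp group and $X$ a suitable $\G_i$-set. In any such group the word length $|(f,x)|$ is comparable to $|x|_{\G_i}+\sum_y|f(y)|_A$ plus a travelling-salesman cost through $\mathrm{supp}\,f$ inside $\G_i$. This yields two controls on $\E|Z_n|$: projecting to $\G_i$ and using Erschler's bound produces the lower envelope $\E|Z_n|\geq cn^{1-1/2^i}$ at all large $n$, while an upper bound is obtained from (number of visited sites, of order $n^{1-1/2^i}$) times (the typical single-site lamp value, bounded by $\sqrt{T(y)}\asymp n^{1/2^{i+1}}$ when $A=\Z$, where $T(y)$ is the local time at $y$). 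Intermediate $\l$ corresponds precisely to capping the single-site lamp value below this diffusive maximum.

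To produce the oscillation I would encode a sequence of scales $m_k\to\infty$ into the geometry of $X$. Writing $X=\bigsqcup_k X_k$ and arranging a single generator of $A$ to act as a cycle of length $m_k$ on each block $X_k$, the lamp at a site $y\in X_k$ effectively lies in $\Z/m_k$, with typical value $\min(m_k,\sqrt{T(y)})$. Calibrating $\{m_k\}$ in bursts --- long plateaus where $m_k$ is small, forcing lamp saturation and driving the total contribution down to $n^{1-1/2^i}$, alternated with rapid jumps where $m_k\gg n^{1/2^{i+1}}$ liberating the full diffusive lamp behaviour and reaching $n^\l$ --- produces the required two-sided oscillation, the transition times being dictated by when $X_n$ first enters the next block. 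Since the action of $A$ on $X$ is non-trivial and $\G_i$ has derived length $i$, a careful choice of the action should give $G$ derived length $i+2$, matching the claim.

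The main obstacle is reconciling finite generation and finite-support constraints with the need for infinitely many distinct lamp sizes: the scales must be realised by one bounded generating set acting on a single geometric object, so the oscillation schedule is slaved to the base walker's hitting times on the blocks. This makes the \emph{lower} bound on the lamp contribution the delicate point --- one has to show that the walker typically fills its lamps rather than missing them --- and requires a diagonal tuning of the parameters $(m_k,|X_k|)$ against the prescribed function $\e(n)$, each new block being chosen large enough that the base walker spends sufficient local time there to either saturate or liberate the corresponding lamp behaviour exactly on schedule.
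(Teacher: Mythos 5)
Your overall philosophy --- realize intermediate exponents by capping the effective lamp size, and oscillate by scheduling infinitely many caps --- is the right one, but the construction you propose does not parse and the key technical lemma is missing. In a permutational wreath product $A\wr_X\G_i=\G_i\ltimes\bigoplus_X A$ the lamp group $A$ does not act on $X$, so "a single generator of $A$ acting as a cycle of length $m_k$ on each block $X_k$" is not a definition of a group; more to the point, a single wreath product has one lamp group, and you never explain how infinitely many distinct effective lamp orders coexist inside one finitely generated group. The paper's device for this is an infinite \emph{diagonal product} of the marked groups $\Z/m_s\Z\wr D_{l_s}$, where the two involutive generators of the dihedral lamp group are planted at positions $0$ and $k_s$ with $k_s\to\infty$: this separation forces the group to coincide with $\Z\wr D_2$ on balls of radius $\sim k_s/2$ (so the new factor is invisible until scale $k_s$, which is what produces the return to $n^{1-1/2^i}$, each earlier factor being finite and hence contributing $O(1)$), and it slows the local times so that the lamp at a site only advances in $D_{l_s}$ when the walker alternates between $x$ and $x-k_s$. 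Nothing in your blueprint plays the role of this separation parameter, and your oscillation mechanism ("slaved to hitting times of blocks") gives no reason why the contribution of already-activated large-lamp regions becomes negligible again at later scales.

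The second gap is quantitative. To get $\E|Z_n|\le n^{\l}$ for \emph{all} large $n$ while also reaching $n^{\l}/\e(n)$ at some scale, the paper proves a uniform continuity statement in the lamp order: $\E|Z_n^{2l}|\le C\,\E|Z_n^{l}|$ with $C$ independent of $n,k,l$ (Lemma \ref{2l}), and then takes the \emph{minimal} $l$ for which the speed ever exceeds $n^{\l}$; minimality gives the global upper bound and the comparison gives the lower bound at the witnessing time. Your calibration has no such mechanism: as described, a block with $m_k\gg n^{1/2^{i+1}}$ "liberates the full diffusive lamp behaviour" and would push the speed to $n^{1-1/2^{i+1}}$, overshooting $n^{\l}$, and nothing prevents the speed from exceeding $n^{\l}$ at intermediate times. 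Finally, for $i\ge2$ you implicitly rely on $\E|Z_n^{\Z\wr\G}|\asymp\sqrt{n\,\E|Z_n^{\G}|}$; this is only known when the inner speed function is concave, which fails (or at least is unclear) for the oscillating groups being built, and the paper has to sidestep it by redoing the whole interpolation between $\G_i(0,2,\infty)$ and $\G_i(k,\infty,\infty)$ one level up rather than citing the formula.
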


 Combined with \cite {LP}, we deduce the

\begin{corollary}
A real number $\l$ is the upper speed exponent of a random walk on a (solvable) group with finite support step distribution if and only if $\l \in \{0\} \cup[\frac{1}{2},1]$.
\end{corollary}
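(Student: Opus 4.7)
The plan is to verify both inclusions, using Theorem~\ref{speed} for the ``achievability'' side and the Lee--Peres lower bound from \cite{LP} for the ``necessity'' side. Nothing subtle is expected; the corollary really is a direct packaging of the theorem with an external lower bound.

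First I would handle the ``if'' direction. For any $\l \in [\frac{1}{2},1]$, Theorem~\ref{speed} produces a finitely generated solvable group with a finitely supported symmetric measure whose associated random walk has upper speed exponent $\l$, which is exactly what is required. For $\l=0$ one simply takes the trivial group (or any finite group) with any symmetric probability measure: then $|Z_n|$ is bounded, so $\limsup \frac{\log \E|Z_n|}{\log n} = 0$. This exhausts $\{0\} \cup [\frac{1}{2},1]$.

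Next I would prove the ``only if'' direction, i.e., that for any finitely supported measure $\m$ on any countable group $G$, the upper speed exponent $\l^\ast := \limsup \frac{\log \E|Z_n|}{\log n}$ lies in $\{0\} \cup [\frac{1}{2},1]$. Let $H \leq G$ be the subgroup generated by the support of $\m$; the walk almost surely stays in $H$, so we may replace $G$ by $H$. The upper bound $\l^\ast \leq 1$ is immediate: if the support of $\m$ lies in a ball of radius $R$ in the word metric of any fixed finite symmetric generating set, then $|Z_n|\leq Rn$ deterministically, hence $\l^\ast \leq 1$. Now split into two cases. If $H$ is finite, then $|Z_n|$ is bounded above by the diameter of $H$, so $\E|Z_n|$ is bounded and $\l^\ast = 0$. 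If $H$ is infinite, then by the Lee--Peres theorem \cite{LP} cited in the introduction, there exists $c>0$ such that $\E|Z_n| \geq c\sqrt{n}$ for all $n\geq 1$; taking logarithms yields $\l^\ast \geq \frac{1}{2}$. Combined with $\l^\ast \leq 1$, this gives $\l^\ast \in [\frac{1}{2},1]$.

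The only step with any content is the appeal to Lee--Peres; the rest is bookkeeping. Since that result is quoted as already available in the introduction (and is used in exactly this dichotomous way), there is no real obstacle: the two directions fit together to yield precisely the characterization $\l^\ast \in \{0\} \cup [\frac{1}{2},1]$. The parenthetical ``(solvable)'' in the statement is covered because the realizations produced by Theorem~\ref{speed} are solvable and the $\l=0$ case is witnessed by a finite (hence solvable) group.
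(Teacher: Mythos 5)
Your proposal is correct and matches the paper's intended argument exactly: the paper derives the corollary by combining Theorem~\ref{speed} (and finite groups for $\l=0$) with the Lee--Peres diffusive lower bound from \cite{LP}, precisely the dichotomy you spell out. No further comment is needed.
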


There is nothing specific with the function $n^\l$, and the theorem is relevant for any function neglectible against $1-\frac{1}{2^{i+1}}$, for instance $n^\l (\log n)^\m$ when $\e(n)=\log \log n$. On the other hand, it is not clear if the lower exponent for solvable groups can be prescribed to a value other than $1-\frac{1}{2^i}$ (see Remark \ref{low}).

Contrary to the groups in \cite{AV} \cite{B} \cite{E3} and \cite{E4}, the groups of Theorem \ref{speed} are solvable. This shows that there are more behaviors of speed among solvable groups than suggested by the previous examples \cite{E1} \cite{E} \cite{R} \cite{T}. In particular, the speed of random walks on solvable groups can oscillate, and there are uncountably many pairwise non-comparable behaviors. It would be interesting to know if solvability implies some specific constraints on the speed function. Recall that there are uncountably many pairwise non-quasi-isometric solvable groups by Cornulier-Tessera \cite{CT}, and that virtual solvability is not a quasi-isometry invariant by Erschler \cite{E+}

The heart of Theorem \ref{speed} is the case $i=1$ when $\l \in [\frac{1}{2},\frac{3}{4}]$. These groups are obtained by "interpolating" between $\Z \wr D_2$ and $\Z \wr D_\infty$, where $D_{l}$ is the dihedral group of size $2l$. With usual "switch-walk-switch" step distribution, the first one has speed $\sqrt{n}$ and the second $n^{\frac{3}{4}}$ by Erschler \cite{E}. The idea is that the switch generators $a$ and $b$ of order two of the dihedral lamp-group in $\Z \wr D_l$ should be placed at positions far apart. This guarantees that this group locally "looks like" $\Z \wr D_2$. It is also asymptotically equivalent to it, hence has diffusive speed when $l$ is finite. However, letting $l$ tend to infinity, the speed tends to $n^{\frac{3}{4}}$ pointwise. By an intermediate value argument, we can find some $l$ with a scale where the speed is roughly $n^\l$. The argument is repeated infinitely many times by means of a diagonal product.

This construction is reminiscent of the technics of perturbation of automata groups used by Erschler \cite{E3}, Kassabov-Pak \cite{KP} and Brieussel \cite{B}. Theorem \ref{speed} could be obtained in the setting of section 7 in \cite{B} for the directed infinite dihedral automata group. However, the presentation given here avoids any reference to this theory, prefering the group $\Z$ rather than the action of $D_\infty$ on a tree.

The notion of diagonal product and local properties of marked groups are presented in section \ref{s2}. Some properties of random walks on lamplighter groups $\Z \wr D_l$ are studied in section \ref{s3}. Theorem \ref{speed} is proved in section \ref{s4}, dealing first with the main case $\frac{1}{2} \leq \l \leq \frac{3}{4}$.

\section{Local properties of marked groups}\label{s2}

\subsection{Diagonal product of marked groups}

Let $\G=\langle \g_1,\dots,\g_k\rangle$ and $F=\langle f_1,\dots,f_k\rangle$ be two $k$-generated groups. We consider the groups together with their fixed and ordered generating set. (Equivalently, we regard them as two quotients of the same free group $\F_k$ of rank $k$). Their diagonal product is the subgroup $\D$ of $\G \times F$ generated by the diagonal generating set :
$$\D=\langle (\g_1,f_1),\dots,(\g_k,f_k)\rangle< \G \times F. $$
For each word in $\F_k$, we denote by $w^\G$ (respectively $w^F$, $w^{\D}$) its evaluation in $\G$ (respectively $F$, $\D$).

Recall that $\G$ is a marked quotient of $\G'$ if the application $\G' \rightarrow \G$ sending $\g'_i$ to $\g_i$ induces a surjective group homomorphism (equivalently, if $w^{\G'}=e$ implies $w^\G=e$ for any $w$ in $\F_k$), and that the marked balls of radius $R$ in $\G$ and $\G'$ are equivalent when $w^{\G'}=e$ is equivalent to $w^\G=e$ for any $w$ of length $\leq 2R$ in $\F_k$. 

\begin{lemma}\label{dgen}
With the above notations, assume $F$ is a finite group. Then there exists $C_1,C_2 >0$ such that for any word $w$ in $\F_k$
$$ |w^\G| \leq |w^\D| \leq C_1 |w^\G|+C_2, $$
where norms are with respect to the above generating sets.

Moreover, the constants $C_1,C_2$ depend only on the finite group $F$ and locally on $\G$, in the sense that there exists $R>0$  such that if $\G'=(\g'_1,\dots,\g'_k)$ is another marked $k$-generated group with $\G$ a marked quotient of $\G'$ and $B_{\G'}(R) \simeq B_\G(R)$, then for any word $w$ in $\F_k$, we also have
$$ |w^{\G'}| \leq |w^{\D'}| \leq C_1 |w^{\G'}|+C_2, $$
where $\D'$ is the diagonal product of $\G'$ and $F$.
\end{lemma}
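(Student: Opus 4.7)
The plan is to prove the two inequalities separately, then check that the constants transfer to any sufficiently close marked group $\G'$. The left inequality is immediate from the coordinate projection $\pi_\G : \D \to \G$, $(\g_i, f_i) \mapsto \g_i$: this is a surjective homomorphism sending generators to generators, hence $1$-Lipschitz, and $\pi_\G(w^\D) = w^\G$ forces $|w^\G| \leq |w^\D|$.

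For the right inequality, the key observation is that the discrepancy between $\G$ and $\D$ lives entirely in the finite group $F$. I would introduce the subgroup
$$K = \{v^F : v \in \F_k,\ v^\G = e\} \leq F,$$
which is finite, and fix once and for all a representative word $v_k \in \F_k$ with $v_k^\G = e$ and $v_k^F = k$ for each $k \in K$, so that $v_k^\D = (e,k)$. Setting $C_2 := 1 + \max_{k \in K} |v_k|$ gives a finite constant. Given an arbitrary word $w \in \F_k$, I would choose a geodesic word $u$ for $w^\G$ in $\G$, so that $|u| = |w^\G|$. Since $u^\G = w^\G$, the element $k := (wu^{-1})^F$ lies in $K$, and hence $w^\D = (e,k) \cdot u^\D = v_k^\D \cdot u^\D$ is represented in $\D$ by the concatenation of $v_k$ and $u$; this gives $|w^\D| \leq |v_k| + |u| \leq C_2 + |w^\G|$, so $C_1 = 1$ works.

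For the locality statement, I would take $R$ large enough that $2R \geq |v_k|$ for every $k \in K$; since $K$ is finite, this $R$ is well-defined and depends only on $F$ together with the finite data of $\G$ that was used to select the $v_k$. Now if $\G'$ is another marked $k$-generated group with $\G$ a marked quotient of $\G'$ and $B_{\G'}(R) \simeq B_\G(R)$, then any word trivial in $\G'$ is a fortiori trivial in $\G$, so $K' := \{v^F : v^{\G'} = e\}$ is contained in $K$. Moreover, each representative $v_k$ has length $\leq 2R$, so the marked ball equivalence forces $v_k^{\G'} = e \iff v_k^\G = e$; in particular the same words $v_k$ still represent $(e,k) \in \D'$ for every $k \in K'$. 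Rerunning the upper-bound argument inside $\G'$ with these same representatives (taking $u$ now to be a $\G'$-geodesic for $w^{\G'}$) yields the same constants $C_1, C_2$. The one point that genuinely needs to be checked is this transfer of the finitely many representatives from $\D$ to $\D'$, which is exactly what the choice of $R$ is designed to secure; everything else is routine bookkeeping.
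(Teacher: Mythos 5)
Your proof is correct, and it takes a genuinely different and in fact more economical route than the paper's. The paper treats $\D$ as a finite-index subgroup of $\G \times F$: it fixes a transversal with representatives of the form $(e,f_i)$, builds the associated Schreier-type generating set $T$ of $\D$, rewrites each $T$-generator as a word in the diagonal generators, and thereby gets $|w^\D| \leq C_1(|w^\G| + \textrm{diam}\,F)$ with a genuinely multiplicative constant $C_1$. You instead work with the kernel $K = \D \cap (\{e\} \times F)$ of the projection $\D \to \G$ and observe that $w^\D$ and $u^\D$ (for $u$ a $\G$-geodesic word representing $w^\G$) differ by the element $(e,k)$ with $k = (wu^{-1})^F \in K$; since $K$ is a subgroup of the finite group $F$, a fixed finite set of word-witnesses $v_k$ absorbs the correction, yielding the sharper additive bound $|w^\D| \leq |w^\G| + C_2$ with $C_1 = 1$ (which of course implies the stated inequality). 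The locality arguments are parallel in spirit --- both reduce to the persistence of finitely many witness words of bounded length under the hypotheses that $\G$ is a marked quotient of $\G'$ and that balls of radius $R$ agree --- but yours has fewer witnesses to track (only the $v_k$, rather than the transversal together with the generating set $T$). The one step that needed care is exactly the one you flag: for a word $w$ and a $\G'$-geodesic $u$, the correction $k = (wu^{-1})^F$ satisfies $(wu^{-1})^{\G'} = e$, hence $(wu^{-1})^{\G} = e$ by the quotient hypothesis, so $k$ lies in the original $K$ and is covered by a chosen $v_k$, which still evaluates to $(e,k)$ in $\D'$ because $|v_k| \leq 2R$ and the marked balls of radius $R$ coincide. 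That closes the argument with the same constants.
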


\begin{proof} The first inequality is easy. For $w^\D \in \D$, there exists a shortest word $W$ in $\F_k$ such that $l(W)=|w^\D|$ and $W^\D=w^\D$. As the generating set is diagonal, we deduce $W^\G=w^\G$, so $|w^\G| \leq l(W) = |w^\D|$.

In order to get the second inequality, we introduce the generating set $S=\{(\g_1,e),\dots, (\g_k,e),(e,f_1),\dots,(e,f_k)\}$ of $\G \times F$. Obviously,  for any $(\g,f) \in \G \times F$, we have 
$$ |\g| \leq |(\g,f)|_S \leq |\g|+\textrm{diam}(F). $$
The projection of $\D$ on the first factor $\G$ is onto, so $\D$ is a finite index subgroup of $\G \times F$, and we can find a transversal of the quotient with representatives of the form $a_i=(e,f_i)$, that is $f_1,\dots f_r$ in $F$ such that $\D (e,f_1) \sqcup \dots \sqcup \D(e,f_r)=\G\times F$.

Now the set $T$ of products of the form $a_isa_j^{-1}$ with $s$ in $S$ which belong to $\D$ is a generating set of $\D$ and we have $|\d|_T \leq |\d|_S$ for any $\d$ in $\D$. (Indeed, for $\d=s_1\dots s_n$, let $a_{i_0}=e$ and $\D a_{i_j}$ be the representative of $\D a_{i_{j-1}}s_j$, then $\d=a_{i_0}s_1 a_{i_1}^{-1}\dots a_{i_{n-1}}s_n a_{i_n}^{-1} a_{i_n}$, with $a_{i_n}=e$ when $\d$ belongs to $\D$.)

So each word in this generating set $T$ can be written as a word in the diagonal generating set $D=((\g_1,f_1),\dots,(\g_k,f_k))$. Thus $|\d|_D \leq C_1 |\d|_T$ for any $\d$ in $\D$ where $C_1$ is the maximal length of the shortest words in $D$ representing the generators in $T$. We conclude that for any $\d=(\g,f)$ in $\D$, we have $|\d|_D \leq C_1(|\g| +\textrm{diam}F)$.

To see that the constants are local (among groups admitting $\G$ as quotient), first observe that in order to find the transversal, we only need to know which couples $(e,f)$ in $\G \times F$ belong to $\D$. If $(e,f)$ belongs to $\D$, then there exists $w_{(e,f)}$ in $\F_k$ with $w_{(e,f)}^\D=(e,f)$, that is $w_{(e,f)}^\G=e$ and $w_{(e,f)}^F=f$. If $(e,f)$ does not belong to $\D$, then whenever $\G'$ admits $\G$ as marked quotient, there is no word such that $w^{\G'\times F}=(e,f)$ (otherwise, we would also get $w^{\G\times F}=(e,f)$ by taking quotient).

Secondly, to describe the generating set $T$, we only need to know which products $t=a_isa_j^{-1}$ (which are now seen as words in $S$) belong to $\D$. If $t$ belongs to $\D$, then there is a word $w_t$ with $w_t^\D=t$. If $t$ does not belong to $\D$, then whenever $\G'$ admits $\G$ as a quotient, there is no word $w$ such that $w^{\G' \times F}=t$ (otherwise, we would get $w^{\G \times F}=t$ by taking quotient). 

Finally, the lemma holds when $R$ is greater than the length of the words $w_{(e,f)}^\D$ and $w_t^\D$. Indeed, these relations and the fact that $\G'$ admits $\G$ as quotient guarantee that the transversal and the generating set $T$ are not changed (as set of words in $S$) when we replace $\G$ by $\G'$. And any generator in $T$ is still expressed by the same word in $D$, preserving $C_1$.
\end{proof}

\subsection{Local coincidence of lamplighter groups} Let $l$ belong to $\Z_{\geq1}\cup\{\infty\}$, we denote $D_l=\langle a,b | a^2= b^2= (ab)^l=1\rangle$ the dihedral group of size $2l$. 

We denote $B \wr L=B \ltimes \sum_B L$ the lamplighter group (or wreath product) with base-group $B$ and lamp-group $L$, where $\sum_B L$ is the set of functions from $B$ to $L$ neutral almost everywhere, with the shift action of $B$ on it. Its elements are couples $(g,f)$ with $g$ in $B$ and $f: B \rightarrow L$ of finite support. The function identically neutral on $B$ is denoted $\mathds{1}$, and the function taking value $l$ at $g$ and neutral elsewhere is denoted $l \d_g$.

%In this paper, the base-groups will be cyclic and the lamp-groups dihedral. As a fixed generating set (marking the group), we take $(+1,\mathds{1}),(0,a\d_0),(0,b\d_k)$ where $k$, to be specified, is an element of the cyclic group.

For integers $k \geq 0$, $l \geq 2$ and $m\geq 1$, denote $\G=\G(k,l,m)$ the group $\Z/m\Z \wr D_l$ with marking generating set $(+1,\mathds{1}),(0,a\d_0),(0,b\d_k)$. The value $\infty$ is allowed for $l$ and $m$. For instance, the group $\G(0,2,\infty)$ is the lamplighter group $\Z \wr D_2$ with usual generating set.

\begin{lemma}\label{k-1/2}
For $l \in \Z_{\geq 2} \cup \{\infty\}$ and $2k<m$, the connected component containing the identity of the subset $\{(n,f) ||n| <k/2\}$ of the groups $\G(k,l,m)$ and $\G(0,2,\infty)$. In particular, the balls of radius $\frac{k-1}{2}$ in these two marked groups coincide.
\end{lemma}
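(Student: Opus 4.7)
The plan is to construct an explicit label-preserving graph isomorphism between the connected component of the identity in $\{(n,f):|n|<k/2\}\subset\G(k,l,m)$ and the corresponding subset of $\G(0,2,\infty)=\Z\wr D_2$. Once such an isomorphism is in hand, the ball statement is immediate: any element reachable from the identity by a word of length at most $(k-1)/2$ has lamplighter position satisfying $|n|\leq(k-1)/2<k/2$, so the whole ball lies inside the connected component. To set up the comparison, I first unpack the right-action of the three generators. In both groups, $s$ shifts $n$ by $+1$ and leaves $f$ unchanged, and $a_0$ multiplies $f(n)$ on the right by $a$. The only difference lies with $b$: the generator $b_k$ in $\G(k,l,m)$ multiplies $f(n+k)$ (mod $m$) on the right by $b$, whereas $b_0$ in $\Z\wr D_2$ multiplies $f(n)$ on the right by $b$.

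The key observation is a disjointness property. Along any path that starts at the identity and stays in the zone $|n|<k/2$, the $a$-letters only modify lamps at positions in the range $(-k/2,k/2)$ and the $b$-letters only modify lamps at positions in the range $(k/2,3k/2)$. The hypothesis $2k<m$ guarantees that these two intervals project injectively and disjointly into $\Z/m\Z$. Consequently, at each occupied lamp-site the lamp takes values in $\langle a\rangle\cong\Z/2$ or in $\langle b\rangle\cong\Z/2$ but never a mixed word in both, and the non-commutation of $a$ and $b$ in $D_l$ is never invoked. This lets me define $\Phi(n,f)=(n,\tilde f)$ from the relevant subset of $\G(k,l,m)$ into $\Z\wr D_2$ by $\tilde f(p)=f(p)f(p+k)\in\langle a\rangle\times\langle b\rangle=D_2$ for $p\in(-k/2,k/2)$ and $\tilde f(p)=e$ otherwise; the disjoint-support observation makes $\Phi$ well defined and injective, and it sends the identity to the identity.

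It remains to verify that $\Phi$ respects the three edge labels. The compatibility with $s$ is immediate since both generators shift $n$; the compatibility with $a_0$ follows because both sides modify the $\langle a\rangle$-component at position $n$; and the compatibility between $b_k$ and $b_0$ is the content of the bijection itself, since modifying $f(n+k)$ in $\G(k,l,m)$ is, by construction of $\Phi$, the same as modifying the $\langle b\rangle$-component of $\tilde f(n)$ in $\Z\wr D_2$. The main bookkeeping task is precisely this generator-by-generator check, but once the disjointness of the $a$-range and the $b$-range is established it reduces to a direct computation with no genuine obstacle.
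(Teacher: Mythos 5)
Your proof is correct and follows essentially the same route as the paper: both arguments rest on the observation that inside the zone $|n|<k/2$ the $a$-lamps live in $(-k/2,k/2)$ and the $b$-lamps in $(k/2,3k/2)$, which are disjoint in $\Z/m\Z$ because $2k<m$, so each lamp is a pure $a$- or $b$-letter and the dihedral relations beyond $a^2=b^2=e$ never intervene. The paper phrases the resulting identification with $\Z\wr D_2$ as ``shifting the $b$-valued part of the lamp function by $k$,'' which is exactly your map $\tilde f(p)=f(p)f(p+k)$.
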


\begin{proof}
Any word in the generators can be written in the form
\begin{eqnarray*} w &=& (i_1,\mathds{1}) (0,a^{\e_1}\d_0)(0,b^{\eta_1}\d_k)\dots (i_s,\mathds{1}) (0,a^{\e_s}\d_0)(0,b^{\eta_s}\d_k) \\ &=& (i_1+\dots+i_s,a^{\e_1}\d_{i_1}b^{\eta_1}\d_{i_1+k}
\dots a^{\e_s}\d_{i_1+\dots+i_s}b^{\eta_s}\d_{i_1+\dots+i_s+k}),
\end{eqnarray*}
%a^{\e_2}\d_{n_1+n_2}b^{\eta_2}\d_{n_1+n_2+k}
using the relation $(i,\mathds{1})(0,b\d_k)(-i,\mathds{1})=(0,b\d_{i+k})$. The word $w$ represents an element $(i,f)$ of the wreath product. We obtain a word describing $f(x)$ by keeping in the product above only the factors $a^{\e_j}$ and $b^{\eta_{j'}}$ where $i_1+\dots+i_j=x$ and $i_1+\dots+i_{j'}+k=x$. The word $w$ represents the identity if and only if $i=i_1+\dots+i_s=0$ and $f(x)=e$ for all $x$ in $\Z$.

As $D_2\simeq \langle a \rangle \times \langle b \rangle$ is a product of two groups of order $2$, the $a$-lamps and the $b$-lamps are independant in $\Z \wr D_2$. Moreover in $\Z/m\Z \wr D_l$, as long as $|i_1+\dots+i_j |<k/2$ for all $j$, then for every $x$ in $\Z$, the lamp $f(x)$ takes values in $\{e,a,b\}$. Thus, the lamp function of $w$ in $\Z \wr D_2$ can be recovered from that in $\Z/m\Z \wr D_l$  and vice versa, just by shifting the $b$-valued part of the function. 
\end{proof}

\begin{remark}\label{2k}
The above proof also shows that for each $k \geq 0$ there is an isomorphism of marked groups between $\G(k,2,\infty)$ and $\G(0,2,\infty)$.
\end{remark}

\subsection{An infinite diagonal product of lamplighter groups with dihedral lamps}\label{diaginf} Let $k=(k_s)$, $l=(l_s)$ and $m=(m_s)$ be three sequences of integers (the sequences $l$ and $m$ are allowed to take value $\infty$) such that $2k_s <m_s$ and denote $\D=\D(k,l,m)$ the infinite diagonal product of the groups $\G(k_s,l_s,m_s)$, that is the subgroup of $\prod_{s}\Z/m_s\Z \wr D_{l_s}$, generated by the three sequences $\t=(\t_s)$, $\a=(\a_s)$ and $\b=(\b_s)$ where $\t_s=(+1,\mathds{1})$, $\a_s=(0,a_s\d_0)$ and $\b_s=(0,b_s\d_{k_s})$.

\begin{lemma}\label{coincide}
Let $\D=\D(k,l,m)$ be as above, and let $R < \frac{\min k -1}{2}$, then the balls of radius $R$ in $\D$ and $\G(0,2,\infty)$ coincide. Moreover, if $k$ is unbounded, $\G(0,2,\infty)$ is a quotient of $\D$.
\end{lemma}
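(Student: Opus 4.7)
My plan is to reduce both statements to Lemma \ref{k-1/2} applied componentwise, exploiting the structure of $\D$ as a diagonally generated subgroup of $\prod_s \G(k_s,l_s,m_s)$. The key structural observation is that for any word $w\in\F_3$, the evaluation $w^\D$ is exactly the tuple $(w^{\G(k_s,l_s,m_s)})_s$, so $w^\D=e$ if and only if $w^{\G(k_s,l_s,m_s)}=e$ for every index $s$. Everything then comes down to a bookkeeping argument tracking the length of $w$ against the indices $k_s$.

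For the ball coincidence, I would fix a word $w$ of length at most $2R$. The hypothesis $R<\frac{\min k-1}{2}$ gives $|w|\leq 2R\leq k_s-1$ for every $s$, which is exactly the range in which Lemma \ref{k-1/2} asserts that $w^{\G(k_s,l_s,m_s)}=e$ is equivalent to $w^{\G(0,2,\infty)}=e$. Chaining this equivalence with the product characterization of $w^\D$ in both directions yields $w^\D=e \iff w^{\G(0,2,\infty)}=e$ for every $w$ of length $\leq 2R$, which is the definition of marked balls of radius $R$ coinciding.

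For the quotient statement, I would exhibit a marked surjection $\D \twoheadrightarrow \G(0,2,\infty)$ by verifying that $w^\D=e$ implies $w^{\G(0,2,\infty)}=e$ for every $w\in\F_3$; surjectivity is automatic since the three generators of $\G(0,2,\infty)$ are the images of $\t,\a,\b$. Given such a $w$, I would invoke the unboundedness of $k$ to pick a single index $s$ with $k_s\geq |w|+1$, placing $|w|$ within the range $\leq k_s-1$ where Lemma \ref{k-1/2} applies to that one coordinate, giving $w^{\G(0,2,\infty)}=w^{\G(k_s,l_s,m_s)}=e$. I do not expect any real obstacle here: the whole geometric content has already been absorbed into Lemma \ref{k-1/2}, and the only delicate point is lining up the strict inequality $R<\frac{\min k-1}{2}$ (respectively the unboundedness of $k$) with the length bound $|w|\leq k_s-1$ required by that lemma.
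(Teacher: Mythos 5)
Your proposal is correct and follows essentially the same route as the paper: both parts reduce to Lemma \ref{k-1/2} applied coordinatewise, using that $w^\D=e$ iff $w^{\G(k_s,l_s,m_s)}=e$ for all $s$; the paper merely phrases the quotient part in contrapositive form (if $w^{\G(0,2,\infty)}\neq e$ then $w^{\G(k_s,l_s,m_s)}\neq e$ for $k_s$ large), which is the same argument as your choice of a single large $k_s$.
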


\begin{proof}
The first statement follows directly from Lemma \ref{k-1/2}. It also implies that if $w^{\G(0,2,\infty)} \neq e$, then for $k_s$ large enough, $w^{\G(k_s,l_s,m_s)}\neq e$, hence $w^\D \neq e$, proving the second statement.
\end{proof}

\section{Random walks on lamplighter groups}\label{s3}

To the generating set $\t=(+1,\mathds{1}),\a=(0,a\d_0),\b=(0,b\d_{k})$ of the wreath product $\G=\G(k,l,m)=\Z \wr D_l$ between the infinite cyclic group and a dihedral group, we associate the "switch-walk-switch" generating set consisting of all words $u_1\t^\eta u_2$ with $u_i$ in $\{\a^{\e_1}\b^{\e_2},\b^{\e_1}\a^{\e_2}\}$,  and we denote $\m$ the measure obtained on it by equidistribution and independence of $\e_j$ taking value $0$ or $1$ and $\eta$ taking value $-1$ or $1$. We denote $Z_n$, or $Z_n^l$ when we want to stress the order of the dihedral group, the random walk of law $\m$ at time $n$. 

The lamplighter interpretation is that at each step the lamplighter first switches the lamp at his position by $a^{\e_1}$ and the lamp at distance $k$ on the right by $b^{\e_2}$, secondly he moves by $\eta$ and thirdly switches the lamp at his new position by $a^{\e_1'}$ and the lamp at distance $k$ on the right by $b^{\e_2'}$. Note that the order of $\a$ and $\b$ in $u_i$ has no influence unless $k=0$ and $l>2$.

\subsection{Infinite dihedral lamp-group}

\begin{lemma}\label{Dinfty}
There exists positive constants $c_1,c_2>0$ depending only on $k$ such that the switch-walk-switch random walk on $\G(k,\infty,\infty)=\Z \wr D_\infty$ satisfies for all $n$:
$$c_1 n^\frac{3}{4} \leq \E|Z_n^\infty| \leq c_2 n^\frac{3}{4}. $$
\end{lemma}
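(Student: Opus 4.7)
The plan is to decompose $Z_n^\infty = (W_n, F_n)$ into its base and lamp coordinates. Because each step of the walk translates the base coordinate by $\eta \in \{-1,+1\}$ uniformly, $W_n$ is a simple random walk on $\Z$; standard estimates give $\E|W_n| \asymp \sqrt{n}$, range $R_n \asymp \sqrt{n}$, and local times $L_n(x)$ summing to $n$. Conditional on the trajectory of $W$, the lamp value $F_n(x)$ is an ordered product, in the order dictated by $W$, of i.i.d.~uniform $\{e,a\}$-valued letters (one per visit of $W$ to $x$, both before and after the walk step) and i.i.d.~uniform $\{e,b\}$-valued letters (one per visit at $x-k$).

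For the upper bound, apply the standard lamplighter/traveling-salesman inequality
$$|Z_n^\infty| \leq |W_n| + 2 R_n + \sum_{x \in \Z} |F_n(x)|_{D_\infty}.$$
Via the identification $D_\infty \cong \Z \rtimes \Z/2$, a product of $N$ independent letters in $\{e, a, b\}$ has $D_\infty$-norm comparable in expectation to the absolute value of a symmetric nearest-neighbor random walk on $\Z$ of length at most $N$, giving $\E[|F_n(x)|_{D_\infty} \mid W] \lesssim \sqrt{L_n(x) + L_n(x-k)}$. Cauchy--Schwarz then yields $\sum_x \sqrt{L_n(x) + L_n(x-k)} \lesssim \sqrt{R_n \cdot n} = O(n^{3/4})$ almost surely, so $\E|Z_n^\infty| \leq c_2 n^{3/4}$.

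For the matching lower bound, use $|Z_n^\infty| \geq \sum_x |F_n(x)|_{D_\infty}$, which holds because any word realising $Z_n^\infty$ must spell out each non-identity lamp, together with the lower bound $\E[|F_n(x)|_{D_\infty} \mid W] \gtrsim \sqrt{L_n(x) + L_n(x-k)}$ from the same $\Z \rtimes \Z/2$ picture. It then suffices to show $\E \sum_x \sqrt{L_n(x)} \gtrsim n^{3/4}$: this is classical and follows from the fact that with probability bounded away from zero, $R_n \geq c\sqrt{n}$ and a positive proportion of the visited sites satisfy $L_n(x) \geq c\sqrt{n}$, so $\sum_x \sqrt{L_n(x)} \gtrsim \sqrt{n} \cdot n^{1/4} = n^{3/4}$.

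The main obstacle is establishing the two-sided bound $\E[|F_n(x)|_{D_\infty} \mid W] \asymp \sqrt{L_n(x) + L_n(x-k)}$: the $a$- and $b$-letters affecting $F_n(x)$ come from visits to two different sites and are interleaved in a complicated order determined by $W$. The analysis works because the switch indicators $\e_j \in \{0,1\}$ are independent of $W$, so conditional on $W$ the lamp $F_n(x)$ is a genuine random walk on $D_\infty$ driven by i.i.d.~uniform letters, whose norm is then controlled by projecting onto the $\Z$ factor in $\Z \rtimes \Z/2$ and invoking a diffusive estimate for a nearest-neighbor walk on $\Z$.
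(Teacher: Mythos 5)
There is a genuine gap in your lower bound, and it sits exactly at the point you flag as ``the main obstacle.'' The two-sided estimate $\E[\,|F_n(x)|_{D_\infty}\mid W\,]\asymp\sqrt{L_n(x)+L_n(x-k)}$ is false in the $\gtrsim$ direction. Conditional on $W$, the lamp at $x$ is indeed the ordered product of letters $a^{\e}$ (from visits to $x$) and $b^{\e'}$ (from visits to $x-k$); but since $a^2=b^2=e$ in $D_\infty$, every block of consecutive $a$-letters collapses to a single uniform letter $a^{\d}$, and likewise for $b$. So the lamp is a $D_\infty$-walk whose effective number of independent steps is not $L_n(x)+L_n(x-k)$ but the number $t_x^{(n)}$ of \emph{alternations} between visits of the induced walk to $x$ and to $x-k$, and one gets $\E[\,|F_n(x)|\mid W\,]\asymp\sqrt{t_x^{(n)}}$ with $t_x^{(n)}\leq 2\min(L_n(x),L_n(x-k))+1$, which can be far smaller than $L_n(x)+L_n(x-k)$. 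Concretely, if $x$ is visited $\asymp\sqrt{n}$ times but $x-k$ lies outside the range of the walk, then $F_n(x)\in\{e,a\}$ has norm at most $1$, while your formula predicts $\asymp n^{1/4}$. (Your upper bound survives, since $\sqrt{t_x^{(n)}}\leq\sqrt{2\bigl(L_n(x)+L_n(x-k)\bigr)}$.)

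Consequently the lower bound cannot be reduced to the classical fact $\E\sum_x\sqrt{L_n(x)}\gtrsim n^{3/4}$; what is actually needed is $\E\sum_x\sqrt{t_x^{(n)}}\gtrsim n^{3/4}$, i.e.\ one must show that the walk on $\Z$ alternates between $x$ and $x-k$ a positive (necessarily $k$-dependent) fraction of the time it spends near them. This is precisely the step the paper isolates: it proves $\P\bigl(\sum_x t_x^{(n)}\geq cn\bigr)\geq 1-\e$ by observing that in every window of $2k$ steps the induced walk performs, with positive probability, an excursion from its current position to $k$ units away and back, increasing $\sum_x t_x^{(n)}$ by $2$; Erschler's argument then delivers the $n^{3/4}$ lower bound with these local times in place of the usual ones. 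This is also exactly where the dependence of $c_1,c_2$ on $k$ enters; your argument as written would produce constants independent of $k$, which is itself a signal that the interleaving of the $a$- and $b$-letters has not genuinely been handled.
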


This lemma generalizes the estimation of the speed of switch-walk-switch random walk on $\Z \wr \Z$ due to Erschler \cite{E}. Replacing the group $\Z$ by $D^\infty$ is harmless, but taking care of $k >0$ requires to slightly adapt her argument. The present lemma would follow directly from \cite{E} if we knew that speed were invariant under change of generating set (which is an open question).  

\begin{proof} First observe that the Cayley graphs of $\Z$ with respect to $\{-1,1\}$ and that of $D_\infty$ with respect to $\{a,b\}$ coincide, as well as the simple random walks on these graphs. We deduce that when $\e_i$ are independent equidistributed in $\{0,1\}$, we have $\E|a^{\e_1}b^{\e_2}\dots a^{\e_{n-1}} b^{\e_n}|\asymp \sqrt n$.

Now the lamp $f(x)$ of $Z_n$ at $x$ can be expressed as a product of $a^{\e_j}$ and $b^{\e_{j'}}$, at times $j$ when $i_1+\dots+i_j=x$ (the lighter at $0$ visits $x$) and $j'$ when $i_1+\dots+i_{j'}=x$ (the lighter at $k$ visits $x$). This word can be rewritten by simplifying each sub-$a$-word and each sub-$b$-word (for instance $a^1b^1b^1a^0a^0$ should be rewritten as $a^1b^0a^0$ but not as $a^1$). 

We denote $t_x^{(n)}$ the length of the word obtained, which we call the local time at $x$. A key observation is that the local times increase slowlier when $k$ is bigger. This local time depends only on the random walk induced on $\Z$, as $t_x^{(n)}$ counts the number of alternations between times when the lighter at $0$ visits $x$ and times when the lighter at $k$ visits $x$ (i.e. when the lighter at $0$ visits $x-k$).

With this definition of local times, the proof of \cite{E} applies to the present setting as soon as we check that there exists $c>0$ such that for any $\e>0$ we have $\P(\sum_{x \in \Z} t_x^{(n)} \geq cn) \geq 1-\e$, for $n$ large. Indeed, this permits to get Erschler's Proposition 1 when $\e$ is small enough to use Lemma 2. The present lemma is then nothing but Lemma 3 in \cite{E} with $A=D_\infty$. 

But when the trajectory induced on $\Z$ is a path of length $2k$ from $0$ to $-k$ and back to $0$, we have $t_0^{(n)} \geq 2$, so the required sum increases by $2$ with positive probability with every multiple of $2k$. This is sufficient.
\end{proof}

\begin{remark}\label{1/2}
Arguments of the same type show that the speed of the switch-walk-switch random walk in $\Z \wr D_l$ with finite $l$ is asymptotically equivalent to $\sqrt{n}$ up to multiplicative constants depending only on $l$ and $k$. 
\end{remark}

\subsection{Comparing finite dihedral lamp-groups}

\begin{lemma}\label{2l}
There exists a universal constant $C>0$ such that for any integers $k$, $l$, $n$, we have
$$\E|Z_n^l| \leq \E|Z_n^{2l}| \leq C \E|Z_n^l| $$
with respect to the switch-walk-switch generating set.
\end{lemma}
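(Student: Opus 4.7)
The first inequality follows by identifying $D_l$ with the quotient $D_{2l}/\langle(ab)^l\rangle$: since $(ab)^l$ is central of order two in $D_{2l}$, quotienting by it produces exactly the defining relation $(ab)^l=1$ of $D_l$. This induces a marked surjection $\pi\colon\G(k,2l,\infty)\to\G(k,l,\infty)$ sending $\t,\a,\b$ to themselves, which is automatically $1$-Lipschitz. Coupling the two walks by feeding the same random bits $\e_j,\eta_j$ into both definitions yields $\pi(Z_n^{2l})=Z_n^l$ almost surely, hence $|Z_n^l|\le|Z_n^{2l}|$ almost surely.

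For the second inequality I work under the same coupling and write $Z_n^{2l}=(g_n,f_n^{2l})$, $Z_n^l=(g_n,f_n^l)$ with $\pi\circ f_n^{2l}=f_n^l$. A standard lamplighter estimate expresses $|(g,f)|$ in $\Z\wr F$ (with $F$ finite), up to multiplicative constants depending only on $k$ and $|F|$, as a walker traveling-salesman cost through the lit positions plus $\sum_x|f(x)|_F$. The walker trajectory is shared, and the only extra lit positions in $\mathrm{supp}(f_n^{2l})\setminus\mathrm{supp}(f_n^l)$ are those where $f_n^{2l}=(ab)^l$ while $f_n^l=e$; their contribution has the same flavour as the lamp correction one must bound. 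So it suffices to produce a universal $C$ with $\E\sum_x|f_n^{2l}(x)|_{D_{2l}}\le C\,\E\sum_x|f_n^l(x)|_{D_l}$.

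Conditional on the walker's trajectory and the random bits, the lamps at different positions are independent, and each $f_n^{2l}(x)$ is a product $\prod_i s_i^{\e_i}$ with $s_i\in\{a,b\}$ determined by visits to $x$ (for $s_i=a$) and $x-k$ (for $s_i=b$). Reducing in $D_\infty=\langle a,b\mid a^2=b^2=1\rangle$ gives $f_n^{2l}(x)=(ab)^{J_x}a^{\delta_x}$ for a random rotation index $J_x$ whose distribution inherits the symmetry $P_i:=\P(J_x\equiv i\bmod 2l)=P_{2l-i}$ from the symmetry of the step distribution. One then computes
\[
\E|f_n^{2l}(x)|_{D_{2l}}=4\sum_{i=1}^{l-1}iP_i+2lP_l+O(1),\qquad \E|f_n^l(x)|_{D_l}=4\sum_{i=1}^{l-1}\min(i,l-i)P_i+O(1).
\]
The pointwise bound $|f_n^{2l}(x)|_{D_{2l}}\le|f_n^l(x)|_{D_l}+2l$ (lift a shortest $D_l$-word and correct by $(ab)^l$ if the lift lands on the wrong element of the two-element fiber) does not yield the multiplicative inequality on its own, because per realization the ratio is infinite when $J_x\equiv l\pmod{2l}$ (lamp $=(ab)^l$, of length $2l$ versus $0$). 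The fix is to split by the effective number of contributing visits $V_x$: for $V_x\ll l^2$ the distribution of $J_x$ is concentrated near $0$ and both expectations are $\Theta(\sqrt{V_x})$, so the ratio is close to $1$; for $V_x\gg l^2$ the law of $J_x\bmod 2l$ is near-uniform and the ratio approaches $2$. The main obstacle is the crossover regime $V_x\sim l^2$, in which the Gaussian of variance $V_x$ is spread across one period and a uniform tail estimate must force $P_l$ to stay comparable with the typical $P_i$ to keep the ratio universally bounded. Summing the pointwise estimate over $x$ and combining with the controlled walker cost then yields $\E|Z_n^{2l}|\le C\,\E|Z_n^l|$ with $C$ universal.
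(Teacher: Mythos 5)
Your first inequality is fine and coincides with the paper's argument: the quotient $D_{2l}\to D_l$ induces a $1$-Lipschitz marked surjection and the natural coupling gives the bound pointwise. The second half, however, has two genuine gaps.

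The first gap is in the reduction. You invoke a ``standard lamplighter estimate'' expressing $|(g,f)|$ as a traveling-salesman cost plus $\sum_x|f(x)|_F$, ``up to multiplicative constants depending only on $k$ and $|F|$''. The lemma demands a constant that is \emph{universal}: in the application $k=k_s\to\infty$ and $l=l_s$ is unbounded, so constants depending on $k$ or on $|F|=4l$ are useless. Worse, for this generating set the additive formula is not the correct one: the $b$-switch sits at offset $k$ from the walker, so writing $(ab)^{L}$ at a single site forces the walker to shuttle between $x$ and $x-k$ about $L$ times (cost $\approx 2kL$), and nearby lit sites share these crossings. The right surrogate for the length (Lemma \ref{max} of the paper) is $\sum_x 2\max_{y\in(x,x+k]}L(y)$ plus a range term --- a windowed maximum, not a sum --- and this differs from $\sum_x|f(x)|_F$ by a factor that is unbounded in $k$. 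This structure cannot be ignored when comparing expectations: the paper pushes the comparison through the maximum via $\E\max\ge\max\E$ combined with a pointwise bound $\E L_{\mathrm{rand}}(y)\ge\tfrac12 L_{\mathrm{cut}}(y)$, and a termwise comparison of $\E\sum_x|f(x)|$ does not substitute for this.

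The second gap is the dihedral estimate itself. You correctly isolate the real difficulty --- lamps equal to $(ab)^l$ are trivial in $D_l$ but of length $2l$ in $D_{2l}$ --- but your treatment stops exactly where a proof is required: in the crossover regime $V_x\sim l^2$ you assert that ``a uniform tail estimate must force $P_l$ to stay comparable with the typical $P_i$'' without establishing any such estimate; the symmetry $P_i=P_{2l-i}$ alone does not give it, and the Gaussian heuristic is not a proof. The missing ingredient is the paper's monotonicity fact, valid for every $t$ with no regime splitting: the law of $d(Y_t,\{e,a\})$ on the dihedral cycle is non-increasing in the distance (a two-line induction on $t$), whence $\P\bigl(|Y_t|\in[\tfrac{3l}{2},2l]\bigr)\le\P\bigl(|Y_t|\in[\tfrac{l}{2},\tfrac{3l}{2}]\bigr)$; that is, the bad event (lamp near $(ab)^l$, long in $D_{2l}$ but short in $D_l$) is dominated by the good event (lamp of comparable length in both groups). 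This single deterministic-in-$t$ inequality is what makes the constant universal, and it is precisely the step your outline leaves open.
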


Replacing with other equivalent length functions just modifies the inequalities by multiplicative constants, for instance with respect to the usual generating sets $\t=(+1,\mathds{1}),\a=(0,a\d_0),\b=(0,b\d_{k})$ by a factor $\leq 5$. The first inequality is obvious since it holds pointwise.

In a dihedral group $D_{2l}$, let $Y_t=a^{\e_1}b^{\e_2}a^{\e_3}b^{\e_4}\dots a^{\e_t}$ with $\e_i$ independent  taking value $0$ or $1$ with probability $\frac{1}{2}$ (the last letter is $b$ when $t$ is even). Denote $p_t(x)=\P(d(Y_t,\{e,a\})=x)$ the distribution of the distance between $Y_t$ and the edge $\{e,a\}$ in the Cayley graph of $D_{2l}$.

\begin{fact}
For each $t \in \Z^+$, the distribution $(p_t(0),\dots,p_t(l-1))$ is non-increasing.
\end{fact}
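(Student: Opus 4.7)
The plan is to identify the obvious symmetry of $Y_t$, use it to reduce the statement to a unimodality claim for the law $q_t(g) := \P(Y_t = g)$, and then prove that unimodality by induction on $t$.

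First I would enumerate the Cayley graph of $D_{2l}$ as the cycle $\{0, 1, \ldots, 4l-1\}$, with $e = 0$, $a = 1$, $a$-edges $\{2k, 2k+1\}$ and $b$-edges $\{2k+1, 2k+2\}$ (indices mod $4l$); the two vertices at distance $r \geq 1$ from the edge $\{e, a\}$ are then $r+1$ and $4l - r$. The key observation is that $a \cdot Y_t = a^{1-\e_1} b^{\e_2} a^{\e_3} \cdots$ is obtained from $Y_t$ by replacing $\e_1$ by $1 - \e_1$, so $Y_t$ and $aY_t$ have the same distribution and therefore $q_t(g) = q_t(ag)$ for every $g$. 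Left multiplication by $a$ corresponds to the reflection $\sigma: x \mapsto 1 - x$ of the cycle, which swaps $r+1$ and $4l - r$. Hence $p_t(r) = 2 q_t(r+1)$ for every $r \geq 0$, and the Fact reduces to the monotonicity $q_t(1) \geq q_t(2) \geq \cdots \geq q_t(l)$.

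I would then prove, as a stronger statement, that $q_t$ is unimodal on the cycle with mode at $\{0, 1\}$, i.e., $q_t(k) \geq q_t(k+1)$ for $k = 1, \ldots, 2l-1$, by induction on $t$, together with the symmetry $q_t(x) = q_t(1-x)$. The base case $t = 1$ is immediate. For the inductive step, applying $a^{\e_{t+1}}$ (resp.\ $b^{\e_{t+1}}$) replaces $q_t$ by its arithmetic average on each $a$-pair $\{2k, 2k+1\}$ (resp.\ $b$-pair $\{2k+1, 2k+2\}$); writing $A_k$ for these $a$-averages, one has
\[
A_k - A_{k+1} = \tfrac{1}{2}\bigl[(q_t(2k) - q_t(2k+2)) + (q_t(2k+1) - q_t(2k+3))\bigr] \geq 0 \qquad (k \geq 1)
\]
by the inductive hypothesis, and for $k = 0$ the symmetry $q_t(0) = q_t(1)$ yields $A_0 - A_1 = q_t(1) - \tfrac{1}{2}(q_t(2) + q_t(3)) \geq 0$. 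The $b$-step is entirely analogous, and the symmetry $q_{t+1}(x) = q_{t+1}(1-x)$ is preserved because the pairings commute with $\sigma$.

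The main technical point is handling the boundary cases of the induction: the pair containing the mode $\{0, 1\}$ requires invoking $q_t(0) = q_t(1)$ from the $\sigma$-symmetry, and the pair near the antipodal edge $\{2l, 2l+1\}$ similarly uses $q_t(2l) = q_t(2l+1)$ to close the chain of inequalities. Apart from these, the argument is just the observation that taking arithmetic means of consecutive pairs of a non-increasing sequence preserves non-increasing.
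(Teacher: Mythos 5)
Your proof is correct and is essentially the paper's argument in ``unfolded'' form: the paper runs the same induction on $t$ directly on the distance distribution $p_t$, observing that each new letter replaces $p_t$ by averages over consecutive pairs (with the endpoints fixed), which preserves monotonicity; you lift this to the law $q_t$ on the $4l$-cycle and recover $p_t(r)=2q_t(r+1)$ via the reflection $x\mapsto 1-x$. Your treatment of the boundary pairs via the symmetry $q_t(0)=q_t(1)$ and $q_t(2l)=q_t(2l+1)$ is exactly the content of the paper's boundary conditions, so the two proofs coincide in substance.
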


\begin{proof}
This is obvious for $t=0$, and we have for $t$ even $p_{t+1}(2x)=p_{t+1}(2x+1)=\frac{1}{2}(p_t(2x)+p_t(2x+1))$, and for $t$ odd $p_{t+1}(2x+1)=p_{t+1}(2x+2)=\frac{1}{2}(p_t(2x+1)+p_t(2x+2))$ and $p_{t+1}(0)=p_t(0)$ and $p_{t+1}(l-1)=p_t(l-1)$, so the fact follows by induction.
\end{proof}

As $d(Y_t,\{a,e\}) \leq d(Y_t,e)=|Y_t| \leq d(Y_t,\{a,e\})+1$, we deduce that (for $l \geq 4$)
\begin{eqnarray}\label{3/2}
\P\left(|Y_t| \in \left[\frac{3l}{2}, 2l\right]\right) \leq \P\left(|Y_t| \in \left[\frac{l}{2},\frac{3l}{2}\right]\right)
\end{eqnarray}

In order to prove Lemma \ref{2l}, we condition our samples by the random walk obtained by projection on the cyclic base-group. This fixes the local time function $t^{(n)}:\Z \rightarrow \Z^+$ introduced in the previous proof and the final position $i_n$. Lemma \ref{2l} reduces to finding a universal constant $C$ such that for any $t^{(n)}$ and $i_n$ we have 
\begin{eqnarray}\label{2lcond}
\E(|Z^{2l}_n||t^{(n)},i_n) \leq C\E(|Z^{l}_n||t^{(n)},i_n)
\end{eqnarray}
 
Under this conditioning, $Z^{2l}_n=(i_n,f_n^{2l})$ is described by the random products $f_n^{2l}(x)=a^{\e_1(x)}b^{\e_2(x)}\dots a^{\e_{t^{(n)}(x)}(x)}$ for $x \in \Z$ of fixed length $t^{(n)}(x)$, starting either by $a$ or by $b$ (and the latter is determined by the projection on the cyclic group), where all the $\e_i(x)$ for $i\in \Z^+$ and $x \in \Z$ are independent.

Then the word length $|Z^{2l}_n|$ is obtained as the solution of the following travelling salesman problem. The traveller starts at $0$ and ends at $i_n$. Meanwhile, he must for each $x \in \Z$ visit the sites $x$ and $x-k$ respectively $A(x)$ and $B(x)$ times, in order that $A(x)$ and $B(x)$ are respectively the number of $a$'s and $b's$ in a word representing $f_n^{2l}(x)$ and these visits to $x$ and $x-k$ have to be made in the order of appearance of the letters $a$ and $b$ in this word. However there is no constraint in the order of visits corresponding to distinct integers $x$ and $x'$.

The word length $|Z^l_n|$ is obtained by the same travelling salesman problem, where we replace $f_n^{2l}(x)$ by its projection $f_n^l(x)$ onto $D_l$. Inequality (\ref{2lcond}) is only true in average because of the existence of words where $f_n^{2l}(x)=(ab)^l$ which is very long in $D_{2l}$, but trivial in $D_l$. Since there is no dependence on the length $n$ of the random product $Z^{2l}_n$, we drop the subscript to ease notations. Our first aim is to obtain a good approximation of the solution of this salesman problem.

For each, $Z=(i,f)\in \Z \wr D_l$, we call range of $Z$ both the set $\{x|f(x)/\langle b \rangle \neq e\}\cup\{x-k|f(x)/\langle a \rangle \neq e\}\cup\{0,i\}$ and the difference between its maximum and minimum (the context will make clear which one we refer to). It always satisfies $|Z| \geq \textrm{Range}(Z)$. In fact, the range of $Z$ is the set of integers visited by the projection on $\Z$ of a minimal  word representing $Z$. 

We denote $L=L_Z$ the function on $\Z$ taking non-negative value $L(x)$ such that $b^{\e_1(x)}(ab)^{L(x)}a^{\e_2(x)}$ with $\e_i\in \{0,1\}$ is a minimal word representing $f(x)$. The length of $Z$ is approximated in terms of the function $L$ and the range as follows.

\begin{lemma}\label{max}
For any $l,k \geq 0$ and $Z=(i,f)$, we have
$$\sum_{x \in \Z} 2 \max(L(y)\mathds{1}_{[y-k,y)}(x)) \leq |Z| \leq \sum_{x \in \Z} 2 \max(L(y)\mathds{1}_{[y-k,y)}(x))+5\textrm{Range}(Z), $$
with respect to the switch-walk-switch generating set.
\end{lemma}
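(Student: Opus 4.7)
Both inequalities follow from analysing the walker's trajectory in $\Z$ associated with any switch-walk-switch word representing $Z$. Since each generator $u_1\t^\eta u_2$ contains exactly one base-move $\t^{\pm 1}$, the length $|Z|$ equals the number of base-moves, which is the total edge-crossing count $\sum_x c_x$, where $c_x$ denotes the number of traversals of the edge between $x$ and $x+1$. The strategy is therefore to lower-bound $c_x$ edge-by-edge from the structure of the lamp function, and to construct an explicit trajectory whose crossings essentially saturate this bound modulo range corrections.

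For the lower bound, fix $y$ with $L(y) \geq 1$ and any $x \in [y-k, y)$. The lamp $f(y)$ is, in time order, the product of the $a$-switches performed while the walker is at position $y$ and the $b$-switches performed while at position $y - k$; this product reduces in $D_l$ to the minimal word $b^{\e_1(y)}(ab)^{L(y)}a^{\e_2(y)}$. Hence the sequence of such switches contains $L(y)$ alternating $a$'s and $b$'s (at least), so the walker's trajectory alternates at least $2L(y)-1$ times between the two positions $y$ and $y-k$, each alternation crossing every edge in $[y-k, y)$. Together with the unavoidable additional crossing needed for the walker to enter or leave $[y-k, y]$ from $0$ or $i$, one obtains $c_x \geq 2L(y)$; taking the maximum over admissible $y$ yields $c_x \geq 2\max_y L(y)\mathds{1}_{[y-k,y)}(x)$, and summing over $x$ gives the left inequality.

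For the upper bound, I construct an explicit trajectory. The walker first moves from $0$ to one extremity of $\textrm{Range}(Z)$, say $r_+$, then performs a sweep across $[r_-, r_+]$ during which, at each $y$ it encounters, it executes the $L(y)$ alternations demanded by lamp $f(y)$ by oscillating between $y$ and $y-k$; finally it moves from $r_-$ to $i$. These framing motions together cost at most $3\,\textrm{Range}(Z)$ moves. The oscillations for distinct $y$'s are nested: within overlapping intervals the round trip serving the $y$ of largest $L(y)$ automatically serves all smaller $L$'s, so that the crossings of edge $x$ in the working part of the trajectory sum to $2\max_y L(y)\mathds{1}_{[y-k,y)}(x)$. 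A further overhead of at most $2\,\textrm{Range}(Z)$ absorbs the boundary letters $b^{\e_1(y)}$ and $a^{\e_2(y)}$, yielding the claimed total overhead of $5\,\textrm{Range}(Z)$.

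The main difficulty is the scheduling in the upper-bound construction. Since intervals $[y-k, y]$ overlap, executing alternations for each $y$ separately would cost $\sum_y 2L(y)k$, far exceeding the target $\sum_x 2\max_y L(y)\mathds{1}_{[y-k,y)}(x)$. Achieving the ``$\max$'' rather than a ``$\sum$'' requires ordering the alternations so that one oscillation contributes to several lamps simultaneously, while at the same time arranging the precise time-order of switches to match $b^{\e_1(y)}(ab)^{L(y)}a^{\e_2(y)}$ for every $y$. Constructing such a schedule and realising it as a legal sequence of switch-walk-switch generators is the technical heart of the proof.
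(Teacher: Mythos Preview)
Your overall framework matches the paper's: count edge-crossings for the lower bound, build an explicit three-phase trajectory for the upper bound, and allocate the $\e_1,\e_2$ boundary letters to a separate range-sized overhead. The lower bound argument is essentially the paper's.

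The gap is in the upper bound. Your key claim---that ``within overlapping intervals the round trip serving the $y$ of largest $L(y)$ automatically serves all smaller $L$'s''---is not correct, and you yourself flag the scheduling as the unfinished ``technical heart''. The intervals $[y-k,y)$ all have length $k$; they overlap but never nest. A round trip between $y$ and $y-k$ visits neither $y'$ nor $y'-k$ for a neighbouring $y'$ in general (e.g.\ with $k=3$, oscillating between $5$ and $2$ never reaches position $6$, so it contributes nothing to the lamp at $y'=6$). Hence one oscillation does \emph{not} automatically serve several lamps, and the passage from a $\sum_y$ cost to a $\max_y$ cost is exactly what remains to be proved.

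The paper closes this gap by an induction on $\sum_x L(x)$. In the central sweep it locates the rightmost ``$k$-connected component'' of $\mathrm{supp}\,L$ (the maximal block of sites whose intervals $[y-k,y)$ chain together), makes a single pass across that block turning on one $a$ and one $b$ for every lamp in it, and then recurses with $L$ decreased by $1$ on that block. Each pass contributes exactly one crossing to every edge covered by the block, so after all passes the edge at $x$ is crossed $2\max_y L(y)\mathds{1}_{[y-k,y)}(x)$ times plus a single range-length traversal. This layer-peeling is what turns the sum into a max; your sketch does not yet contain an argument of this kind.
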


\begin{proof}
For each $y$ in $\Z$, the solution of the travelling salesman problem has to visit $y$ and $y-k$ at least $L(y)$ times and alternately. If $x$ belongs to $[y-k,y)$ the salesman has to cross at least $2L(y)$ times the edge right of $x$. This gives the first inequality. (The choice of $\e_i$ guarantees that any other word representing $f(x)$ contains $(ab)^{L(x)}$ as subword.) 

To show the second inequality, it is sufficient to describe a path in $\Z$ that permits to represent $Z$. The first part of the path consists in moving from $0$ to the maximum of the range, crossing on the way all sites $x-k$ where $\e_1(x) \neq 0$ (and turning on the lamps there appropriately). This has length less than twice the range. The last part of the path consists in moving from the minimum of the range to $i$, crossing on the way all sites $x$ where $\e_2(x)\neq0$ (and turning on the lamps), which also has length less than twice the range.

The central part of the path consists in moving from the maximum to the minimum of the range, switching on the way each lamp at $x$ on $(ab)^{L(x)}$, this can be done in less than $\sum_{x \in \Z} 2 \max(L(y)\mathds{1}_{[y-k,y)}(x))+\textrm{Range}(Z)$ steps. Let us prove it by induction on $\sum_{x\in \Z} L(x)$. If the sum is zero, the path just goes from the maximum to the minimum. In general, the salesman should first travel from the maximum of the range to the maximum $x_1$ of the support of $L$, then to the maximum $x_1'$ of the points $x$ such that $L(y)=0$ for all $y \in [x, x+k)$, then back to $x_1$, switching on  the $a$-lamps between $x_1$ and $x_1'+k$ and the $b$-lamps between $x_1'$ and $x_1-k$ (the intervalle $[x_1'+k,x_1]$ is the rightmost $k$-connected component of the support of $L$). The rest of the path is described by induction applied between $x_1$ and the minimum of the range and to $L'$ satisfying $L'(x)=L(x)-1$ whenever $x$ is in the rightmost $k$-connected component of the support of $L$.
\end{proof}

\begin{proof}[Proof of Lemma \ref{2l}] It is sufficient to prove inequality (\ref{2lcond}). First observe that if $\forall x \in \Z$, $t(x) \leq \frac{l}{2}$, then $|Z^{2l}|=|Z^l|$. Indeed, this assumption implies that $|f^{2l}(x)|\leq \frac{l}{2}$, so its image in the quotient $D_{2l} \rightarrow D_l$ also has $|f^{l}(x)|\leq \frac{l}{2}$. If a path $\g$ in $\Z$ is the projection of a switch-walk-switch word representing $Z^l$, it permits to write $f^l(x)$ at each $x$. Thus the same path permits to write one of its two preimages $f^{2l}(x)$ or $f^{2l}(x)(ab)^l$. Now as the second preimage has length $\geq \frac{3l}{2}$ it contains as a subword any reduced word of length $\leq \frac{l}{2}$, in particular $f^{2l}(x)$. Thus keeping the same projection $\g$ on $\Z$ and only modifying the lamp-factors, we can find a word of length $|\g|$ representing $Z^{2l}$. 

For the general case, let $X$ denote the set of points $x$ where $t(x) \geq \frac{l}{2}$. To each sample we associate $A=\{x\in \Z | |f^{2l}(x)| \geq \frac{l}{2}\}$ and we condition by subsets $A$ of $X$ to get 
\begin{eqnarray*}
\E(|Z^{2l}||t,i) =\sum_{A \subset X} \E(|Z^{2l}||t,i,A)\P(\{x\in \Z | |f^{2l}(x)| \geq \frac{l}{2}\}=A)
\end{eqnarray*}
From now on we fix $t,i,A$ and the value of $f^{2l}$ outside $A$. We define $Z^{2l}_{\textrm{cut}}=(i,f^{2l}_{\textrm{cut}})$ by $f^{2l}_{\textrm{cut}}(x)=f^{2l}(x)$ for $x \notin A$, and $f^{2l}_{\textrm{cut}}=(ab)^{\frac{l}{4}}$ of length $\frac{l}{2}$ for $x \in A$. We have $|Z^{2l}| \leq 9 |Z^{2l}_{\textrm{cut}}|$. Indeed, if we have an appropriate path for $Z^{2l}_{\textrm{cut}}$, up to going through back and forth four times, we can write any representative word at $x$ instead of $(ab)^{\frac{l}{4}}$, and a ninth travel ends at $i$. Moreover, our first observation shows that $|Z^{2l}_{\textrm{cut}}|=|Z^{l}_{\textrm{cut}}|$.

Now consider the random partition of $A$ into $A^+ \cup A^-$, where $A^+$ is the set of points $x$ in $A$ where $|f^{2l}(x)|\in [\frac{l}{2},\frac{3l}{2}]$. By (\ref{3/2}), we have for each $x$ in $A$ that $\P(x \in A^-|t,i,A) \leq \frac{1}{2}$. We define the discretized element $Z^l_{\textrm{rand}}$ by $f^l_{\textrm{rand}}(x)=f^l(x)$ for $x \notin A$, $f^l_{\textrm{rand}}(x)=(ab)^{\frac{l}{4}}$ for $x \in A^+$, and $f^l_{\textrm{rand}}(x)=e$ for $x \in A^-$. We have $|Z^l_{\textrm{rand}}| \leq 3 |Z^l|$. Indeed, $f^l(x)$ is a subword of $f^l_{\textrm{rand}}(x)$ except when $f^l(x)=(ba)^\frac{l}{4}$, so switching the lamps of an appropriate path for $Z^l$ is also appropriate except when $f^l(x)=(ba)^\frac{l}{4}$, but this word can be written at $x$ on the way back. A third travel ends at $i$. 

To get the lemma, there remains to show that 
\begin{eqnarray}\label{fin}
|Z^l_{\textrm{cut}}| \leq 22 \E(|Z^l_{\textrm{rand}}||t,i,A,f_{|\Z\setminus A})
\end{eqnarray}
By Lemma \ref{max}, we have 
\begin{eqnarray*}
\E(|Z^l_{\textrm{rand}}||t,i,A) \geq \E\sum_{x \in \Z} 2 \max_{y \in (x,x+k]}(L_{\textrm{rand}}(y)) \geq \sum_{x \in \Z} 2 \max_{y \in (x,x+k]}\E L_{\textrm{rand}}(y).
\end{eqnarray*}
Now for $y\notin A$, $L_{\textrm{rand}}(y)=L(y)=L_{\textrm{cut}}(y)$ and for $y\in A$, $L_{\textrm{rand}}(y)=\frac{l}{4}$ with probability $\P(f^{2l}(y) \in [\frac{l}{2},\frac{3l}{2}]) \geq \frac{1}{2}$ and $L_{\textrm{rand}}(y)=0$ with probability $\P(f^{2l}(y) \in (\frac{3l}{2},2l]) \leq \frac{1}{2}$ and $L_{\textrm{cut}}(y)=\frac{l}{4}$, so $\E L_{\textrm{rand}}(y) \geq \frac{1}{2}L_{\textrm{cut}}(y)$ for any $y$ in $\Z$. We deduce $\E(|Z^l_{\textrm{rand}}||t,i,A) \geq \sum_{x \in \Z}  \max(L_{\textrm{cut}}(y)\mathds{1}_{[y-k,y)}(x))$.

On the other hand, the range of $Z^l_{\textrm{rand}}$ is equal to the range of $Z_{\textrm{cut}}^l$ with probability $\geq \frac{1}{4}$ (when both the minimal and maximal lamps are on), so $\E(|Z^l_{\textrm{rand}}||t,i,A) \geq \E(\textrm{Range}(Z^l_{\textrm{rand}})|t,i,A)\geq \frac{1}{4} \textrm{Range}(Z_{\textrm{cut}}^l)$. The two previous inequalities and Lemma \ref{max} imply (\ref{fin}).

Piling up the inequalities between $Z^{2l}, Z^{2l}_{\textrm{cut}}, Z^l_{\textrm{cut}}, Z^l_{\textrm{rand}}$ and $Z^l$, we obtain Lemma \ref{2l} with $C=594$.
\end{proof}

\section{Proof of Theorem \ref{speed}}\label{s4}

\subsection{A baby case} The following proposition is the core of Theorem \ref{speed}. It is not used directly in the proof, but serves as a guideline.

\begin{proposition}
Let $C$ be the universal constant of Lemma \ref{2l}, and let $\l \in (\frac{1}{2},\frac{3}{4})$. For any $n_0 \in \Z_{\geq 0}$, there exists $k,l$ integers such that the speed of the switch-walk-switch random walk on $\G(k,l,\infty)$ satisfies 
\begin{enumerate}
\item $\forall n \leq n_0, \E|Z_n^{\G(k,l,\infty)}|=\E|Z_n^{\G(0,2,\infty)}|$,
\item $\forall n \geq 0, \E|Z_n^{\G(k,l,\infty)}| \leq n^\l$,
\item $\exists N \geq n_0, \E|Z_n^{\G(k,l,\infty)}|\geq \frac{N^\l}{C}$,
\item $\exists C_1>0, \E|Z_n^{\G(k,l,\infty)}| \leq C_1 \sqrt{n}$ for $n$ large enough.
\end{enumerate}
\end{proposition}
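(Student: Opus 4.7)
The strategy is a discrete intermediate-value argument in the parameter $l$, combining the local rigidity of Lemma~\ref{k-1/2} with the doubling estimate of Lemma~\ref{2l} and the two endpoint behaviors from Lemma~\ref{Dinfty} and Remark~\ref{1/2}. First, fix $k$ large compared to $n_0$, say $k \geq 10 n_0 + 3$. Since each switch-walk-switch step has length at most $5$ in the marked generators $(\tau,\alpha,\beta)$, the deterministic bound $|Z_n| \leq 5n \leq (k-1)/2$ holds for all $n \leq n_0$ and every $l$, so Lemma~\ref{k-1/2} identifies the law of the walk on $\Gamma(k,l,\infty)$ with that on $\Gamma(0,2,\infty)$ up to time $n_0$, yielding property (1).

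With $k$ fixed, write $\sigma_j(n) := \E|Z_n^{\Gamma(k,2^j,\infty)}|$ for $j \in \Z_{\geq 1} \cup \{\infty\}$. Lemma~\ref{Dinfty} gives $\sigma_\infty(n) \asymp n^{3/4}$, so $\sigma_\infty(n)/n^\lambda$ is unbounded since $\lambda < 3/4$; Remark~\ref{1/2} gives $\sigma_j(n) = O_j(\sqrt{n})$ for each finite $j$, so this ratio tends to $0$ as $n \to \infty$ since $\lambda > 1/2$. Lemma~\ref{2l} provides the chain $\sigma_j \leq \sigma_{j+1} \leq C \sigma_j$ pointwise, and $\sigma_j(n) \to \sigma_\infty(n)$ for each fixed $n$ once $2^j$ exceeds the diameter of the dihedral factor used in a trajectory of length $n$ (the relation $(ab)^{2^j}$ is never triggered). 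Set
\[
j^* \;=\; \max\bigl\{\, j \geq 1 \;:\; \sigma_j(n) \leq n^\lambda \text{ for all } n \,\bigr\},
\]
absorbing small-$n$ boundary issues by enlarging $n_0$ to $\max(n_0, n_1(\lambda,C))$ for a universal threshold $n_1$; this maximum is finite by the unboundedness of $\sigma_\infty/n^\lambda$ combined with pointwise convergence.

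Taking $l := 2^{j^*}$, property (2) holds by construction and property (4) follows from Remark~\ref{1/2}. For property (3), maximality of $j^*$ furnishes some $N$ with $\sigma_{j^*+1}(N) > N^\lambda$, and then the upper half of Lemma~\ref{2l} gives $\sigma_l(N) \geq \sigma_{j^*+1}(N)/C > N^\lambda/C$. The constraint $N \geq n_0$ is forced by (1): for $n \leq n_0$ we have $\sigma_l(n) = \sigma_{\Gamma(0,2,\infty)}(n) \asymp \sqrt{n} < n^\lambda/C$ thanks to the enlarged choice of $k$, so the witness of the failure must lie above $n_0$. The only real obstacle is that $l$ varies only discretely along powers of two, so the speed can jump by at most the factor $C$ from one value to the next; this is precisely why $C$ appears in (3), and why no tighter statement is available from this scheme.
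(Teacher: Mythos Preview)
Your proof is correct and follows essentially the same intermediate-value scheme as the paper: fix $k$ large via Lemma~\ref{k-1/2} to secure (1), then select $l$ as the last value before the speed of $\Gamma(k,2l,\infty)$ overshoots $n^\lambda$, using Lemma~\ref{2l} to bound the jump and Lemma~\ref{Dinfty} and Remark~\ref{1/2} for the endpoints. Your explicit restriction to dyadic $l=2^{j}$ is in fact slightly cleaner than the paper's phrasing, since it makes the quotient chain $D_{2^j}\twoheadleftarrow D_{2^{j+1}}$ (and hence the pointwise monotonicity $\sigma_j\le\sigma_{j+1}$) automatic.
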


\begin{proof}
By Lemma \ref{coincide}, choose $k$ large enough so that the balls of radius $n_0$ in $\G(k,l,\infty)$ and $\G(0,2,\infty)$ coincide. This gives (1). Remark \ref{1/2} ensures (4). Now let $l$ be minimal such that there exists $N$ with $\E|Z_N^{\Z\wr D_{2l}}| >N^\l$ (this exists by Lemma \ref{Dinfty}). Minimality gives (2) and we obtain (3) by Lemma \ref{2l}.
\end{proof}

\subsection{The case $\l \in [\frac{1}{2},\frac{3}{4}]$}\label{main}

Let $\D=\D(k,l,m)$ be the infinite diagonal product of the groups $\G(k_s,l_s,m_s)$ as defined in section \ref{diaginf}. This group is $3$-solvable. We assume that the sequence $k=(k_s)$ tends to infinity, and that all values of $k,l,m$ are finite.

For each $s\in \Z_{\geq 1}$, denote $\L_s$ the diagonal product from $k=1$ to $s-1$ of these groups, which is a finite group, and $M_s$ the diagonal product from $s$ to infinity. The group $\D$ is the diagonal product $\L_s \times M_s$.

By Lemma \ref{k-1/2}, the group $M_s$ admits $\G(0,2,\infty)$ as quotient, and if $\frac{k_s-1}{2} >R$, the balls of radius $R$ in $\L_s \times M_s$ and in $\D_s=\L_s \times \G(0,2,\infty)$ coincide.

Let $\l \in [\frac{1}{2},\frac{3}{4})$ et $\e(n) \rightarrow \infty$. To prove Theorem \ref{speed}, it is sufficient to construct by induction on $s$ integers $k_1<\dots<k_{s-1}$, $l_1,\dots,l_{s-1}$ and $m_1,\dots,m_{s-1}$  such that the switch-walk-switch random walk on $\D_s=\L_s \times \G(0,2,\infty)$ satisfies:
\begin{eqnarray}\label{rec}\begin{array}{l}
 \forall n \geq 1, n^{\frac{1}{2}} \leq \E|Z_n^{\D_s}| \leq n^\l \textrm{ and }\\
 \exists n_1 < N_1<\dots < n_{s-1}<N_{s-1},  
 \E|Z_{n_i}^{\D_s}| \leq n_i^{\frac{1}{2}}\e(n_i) \textrm{ and } \E|Z_{N_i}^{\D_s}| \geq \frac{N_i^\l}{\e(N_i)},
\end{array}\end{eqnarray}
and that $\frac{k_s-1}{2}>N_{s-1}$, so the balls of radius $N_{s-1}$ in $\G_s$ and in $\G$ coincide. (To get the Theorem with $\l=\frac{3}{4}$, the function $n^\l$ should be replaced by $\frac{n^{\frac{3}{4}}}{\log n}$, which does not alter the proof.)

There is nothing to prove for $s=0$. Assume this is true for $s$, and construct $k_s,l_s,m_s$. First observe that as $\L_s$ is a finite group, Lemma \ref{dgen} gives two constants $C_1,C_2$ such that $\E|Z_n^{\G(0,2,\infty)}| \leq \E|Z_n^{\D_s}| \leq C_1 \E|Z_n^{\G(0,2,\infty)}|+C_2$. By Remark \ref{1/2}, we can choose $n_s$ such that
\begin{eqnarray}\label{cond5}
\forall n \geq n_s, \E|Z_n^{\D_s}|\leq n^{\frac{1}{2}}\e(n) \textrm{ and } \frac{n^\l-C_2}{CC_1}\geq \frac{n^\l}{\e(n)},
\end{eqnarray}
where $C$ is the universal constant of Lemma \ref{2l}.

Now we choose $k_s$ such that $\frac{k_s-1}{2} > n_s$, which guarantees by Lemma \ref{coincide} that the balls of radius $n_s$ in $\G(0,2,\infty)$ and in $M_s$ coincide (and in particular also their speed functions for $n \leq n_s$), and such that $\frac{k_s-1}{2}$ is bigger than the radius $R$ of Lemma \ref{dgen} with $F=\L_s$ and $\G=\G(0,2,\infty)$ so that the constants $C_1,C_2$ are valid for $\L_s \times M_s$ instead of $\D_s$.

For $l$ even, we consider the family of auxiliary groups $G_l=\L_s \times \G(k_s,l,\infty)$. As $l$ is even, these groups all admit $\G(0,2,\infty)$ as quotient, so Lemma \ref{dgen} applies to both $G_l$ and $G_{2l}$. Moreover, Lemma \ref{2l} relates the speed in $\G(k_s,l,\infty)$ and $\G(k_s,2l,\infty)$. Combining these results, we get
\begin{eqnarray} \label{6} \frac{\E|Z_n^{G_l}|-C_2}{C_1} \leq \E|Z_n^{G_{2l}}| \leq C_1 C \E|Z_n^{G_l}|+C_2. \end{eqnarray}

Now let $l_s$ be the minimal $l$ such that there exists $N=N_s$ with $\E|Z_N^{G_{2l}}|>N^\l$. This $l_s$ exists by Lemma \ref{Dinfty} and because the balls of radius $l$ in $\G(k_s,l,\infty)$ and $\G(k_s,\infty,\infty)$ coincide, so the speed functions $\E|Z_n^{\G(k_s,l,\infty)}|$ converge pointwise with $l$ to $\E|Z_n^{\G(k_s,\infty,\infty)}|$. By (\ref{6}) and the second condition (\ref{cond5}), we have 
\begin{eqnarray}\label{7}
\forall n \geq 1, \E|Z_n^{\L_s \times \G(k_s,l_s,\infty)}| \leq n^\l \textrm{ and } \E|Z_{N_s}^{\L_s \times \G(k_s,l_s,\infty)}| \geq \frac{N_s^\l}{\e(N_s)}
\end{eqnarray}
Now we choose $m_s>2N_s$ in order that the balls of radius $N_s$ in $\L_s \times \G(k_s,l_s,\infty)$, in $\L_s \times \G(k_s,l_s,m_s) \times \G(0,2,\infty)$ and in $\L_s \times \G(k_s,l_s,m_s) \times M_{s+1}=\D$ all coincide as soon as $\frac{k_{s+1}-1}{2}>N_s$.

\begin{remark}
The existence of $l_s$ is due to an intermediate value argument. In this viewpoint, Lemma \ref{2l} is a statement of continuity in $l$ of the speed function of $\G(k,l,\infty)$, uniform in $k$.
\end{remark}

\begin{remark}\label{low}
By choosing $n_s$ even larger at step (\ref{cond5}) in the previous proof, we can get arbitrarily large intervals where $\E|Z_n^\D|$ is close to $\sqrt{n}$. In contrast, we have no control on the width of intervals where $\E|Z_n^\D|$ is close to $n^\l$. Note that if we try to reduce $n_s$, for instance to obtain a bigger lower exponent, we lose control over the upper exponent because the second condition (\ref{cond5}) and thus (\ref{7}) may fail.
\end{remark}

\subsection{The general case} To obtain the case $i=2$ (i.e. $\l \in [\frac{3}{4},\frac{7}{8}]$), the idea is essentially to replace $\D(k,l,m)$ by $\Z \wr \D(k,l,m)$ and use Erschler \cite{E} that $\E|Z_n^{\Z \wr \D}|$ is comparable with $\sqrt{n \E|Z_n^\D|}$. However, this is known only under the hypothesis that $\E|Z_n^\D|$ is concave, which is not clear in our case. In practice, we get Theorem \ref{speed} by running the machinery of the previous proof, "interpolating" between $\Z \wr (\Z\wr D_2)$ and $\Z \wr (\Z \wr D_\infty)$.

For $i \geq 1$, let $\G_{i+1}(k,l,m)=\Z/m\Z \wr \G_i(k,l,m)$, where $\G_1(k,l,m)=\G_1(k,l,m)$ with switch-walk-switch generating set. As marked generating set for $\G_{i+1}$, we use the switch-walk-switch generating set where switch generators are the marked generators of $\G_i$ at position $0$.

\begin{lemma}\label{last} With the above notations, for any integers $i \geq 1$, $l \geq 0$ and $k \geq 0$ 
\begin{enumerate}
\item if $2k<m$, the balls of radius $\frac{k-1}{2}$ in $\G_i(k,l,m)$ and $\G_i(0,2,\infty)$ coincide.
\item the speed in $\G_i(k, \infty, \infty)$ is comparable with $n^{1-\frac{1}{2^{i+1}}}$.
\item the groups $\G_i(k, 2, \infty)$ and $\G_i(0, 2, \infty)$ are isomorphic as marked groups, and their speed is comparable with $n^{1-\frac{1}{2^{i}}}$.
\item there is a universal constant $C_i$ such that 
$$ \frac{1}{C_i} \E|Z_n^{\G_i(k,l,\infty)}| \leq \E|Z_n^{\G_i(k,2l,\infty)}| \leq C_i \E|Z_n^{\G_i(k,l,\infty)}|. $$
\end{enumerate}
\end{lemma}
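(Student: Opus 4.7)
My approach will be induction on $i \geq 1$, proving all four statements simultaneously. The base case $i=1$ collects the results of Sections \ref{s2} and \ref{s3}: since $\G_1(k,l,m)$ is exactly the group $\G(k,l,m)$ there, (1) is Lemma \ref{k-1/2}, (2) is Lemma \ref{Dinfty}, (3) combines Remark \ref{2k} with Remark \ref{1/2}, and (4) is Lemma \ref{2l}. For the inductive step from $i$ to $i+1$, I exploit the wreath product structure $\G_{i+1}(k,l,m) = \Z/m\Z \wr \G_i(k,l,m)$, whose generators are a walk generator $(+1,\mathds{1})$ together with the switch generators $(0,g\d_0)$ for $g$ a marked generator of $\G_i$.

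For part (1), any word of length at most $k-1$ in the generators of $\G_{i+1}(k,l,m)$ keeps its outer cursor in $\{-(k-1),\dots,k-1\}$. Since $2k<m$, this range injects into $\Z/m\Z$, so the outer cursor behaves identically in $\Z/m\Z$ and in $\Z$. Each lamp value accumulated along the way is a word of length at most $k-1$ in the generators of $\G_i$, and the inductive hypothesis (1) at step $i$ guarantees that such words have the same evaluation in $\G_i(k,l,m)$ and $\G_i(0,2,\infty)$. Hence the balls of radius $(k-1)/2$ coincide in $\G_{i+1}(k,l,m)$ and $\G_{i+1}(0,2,\infty)$.

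For part (3), the marked isomorphism $\G_i(k,2,\infty)\simeq \G_i(0,2,\infty)$ given by induction extends to $\Z\wr \G_i(k,2,\infty)\simeq \Z\wr \G_i(0,2,\infty)$ by applying it lamp-wise, preserving the switch-walk-switch marking. For the speed, the inductive hypothesis gives the lamp group speed $\asymp n^{1-1/2^i}$, and Erschler's formula $\E|Z_n^{\Z\wr H}|\asymp \sqrt{n\,\E|Z_n^H|}$ (available here since the power-like speed function $n^{1-1/2^i}$ is concave) yields $n^{1-1/2^{i+1}}$ for $\G_{i+1}(k,2,\infty)$. Part (2) is the same argument starting from the base speed $n^{3/4}$ of $\Z\wr D_\infty$; the shift $k>0$ is absorbed via the local-time reinterpretation already used in the proof of Lemma \ref{Dinfty}, which carries through each wreath-product layer unchanged.

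The main obstacle is part (4). The natural plan is to condition on the projection of $Z_n^{\G_{i+1}}$ to the outer $\Z$-base and on its local time function $t^{(n)}$; the lamp at each site $x$ is then an independent random walk of length $t^{(n)}(x)$ on $\G_i(k,l,\infty)$ (respectively $\G_i(k,2l,\infty)$), and the inductive hypothesis (4) at step $i$ bounds their expected lengths pointwise by a factor $C_i$. To pass from this pointwise lamp-length comparison to a comparison of overall $\G_{i+1}$-lengths, I plan to develop a travelling-salesman estimate for $|Z|$ in $\Z\wr H$ analogous to Lemma \ref{max}, expressing $|Z|$ up to bounded error in terms of the range of the outer walk (which is $O(\sqrt{n})$ and dominated by the speed) plus the sum over sites of the $H$-lengths of the lamps. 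The subtlety, and the hardest step to execute cleanly, is that the original proof of Lemma \ref{2l} used features specific to $D_l$ versus $D_{2l}$ (the monotonicity of the distribution of $|Y_t|$ in the dihedral Cayley graph) which have no direct analogue for the abstract lamp group $\G_i$; the argument must therefore be reformulated so that it depends only on the inductive lamp-speed comparison and the travelling-salesman decomposition, not on the dihedral structure itself. The resulting universal constant $C_{i+1}$ will be allowed to grow with $i$, consistent with the statement.
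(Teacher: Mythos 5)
Your plan is sound and is essentially the paper's proof, which is itself only a one-line citation of Lemma \ref{k-1/2}, Lemma \ref{Dinfty}, Remarks \ref{2k} and \ref{1/2}, Lemma \ref{2l} and Erschler's drift asymptotics; your induction on $i$ with the base case assembled from Sections \ref{s2}--\ref{s3} is the natural way to organize it. The one place where you overestimate the difficulty is part (4). You worry that the dihedral-specific ingredient of Lemma \ref{2l} (the monotonicity of the distribution of $|Y_t|$ in $D_{2l}$) has no analogue for the abstract lamp group $\G_i$ and that the whole argument must be rebuilt. In fact that ingredient is needed exactly once, at the base case $i=1$, where it \emph{is} Lemma \ref{2l}. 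At level $i+1$ the switch generators sit at position $0$ (there is no $k$-shift in the outer layer, so not even the refined Lemma \ref{max} is needed there): conditioning on the projected walk on the outer $\Z$-base, the lamp at each site $x$ is distributed as the level-$i$ random walk run for the local time $t^{(n)}(x)$, identically in the $l$ and $2l$ versions, so the inductive hypothesis (4) compares the conditional expected lamp lengths site by site with constant $C_i$. Combining this with the standard two-sided estimate $\E|Z_n^{\Z\wr H}|\asymp \E(\mathrm{Range})+\sum_x\E|f(x)|_H$ for a $\Z$-base (the travelling-salesman tour costs between the range and a bounded multiple of it, and $\E(\mathrm{Range})\asymp\sqrt n$ is dominated by the diffusive lower bound on $\E|Z_n^{\Z\wr H}|$) gives $C_{i+1}$ directly; no new distributional fact about $\G_i$ is required. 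With that clarification your parts (1)--(3) are handled exactly as in the paper, including the observation that Erschler's formula is applied here only to the fixed groups $\G_i(0,2,\infty)$ and $\G_i(k,\infty,\infty)$, where the concavity caveat raised in Section \ref{s4} for the diagonal products does not arise.
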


\begin{proof}
This follows from Lemma \ref{k-1/2} and \ref{Dinfty}, Remarks \ref{2k} and \ref{1/2}, Lemma \ref{2l} and Erschler \cite{E}.
\end{proof}

The group with speed oscillating between $n^{1-\frac{1}{2^{i}}}$ and $n^\l$ will be a diagonal product $\D_i=\D_i(k,l,m)$ of $\G_i(k_s,l_s,m_s)$ with appropriate sequences $k=(k_s)$ tending to $\infty$, $l=(l_s)$ and $m=(m_s)$ with $m_s>2k_s$. The proof of section \ref{main} applies, replacing the references to previous sections by Lemma \ref{last}.

\textsc{\newline J\'er\'emie Brieussel \newline Universit\'e de Montpellier \newline Place E. Bataillon cc 051 \newline 34095 Montpellier, France} \newline
\textit{E-mail address:} jeremie.brieussel@univ-montp2.fr

\end{document}